\theoremstyle{plain}
\newtheorem{thm}{Theorem}
\theoremstyle{plain}
\newtheorem{prop}{Proposition}
\theoremstyle{plain}
\newtheorem{lem}{Lemma}
\theoremstyle{plain}
\newtheorem{cor}{Corollary}
\theoremstyle{definition}
\newtheorem{dfn}{Definition}
\title{A Uniform Bound for the Order of Monodromy}
\author{Naoya Umezaki}
\address{Graduate School of Mathematical Sciences, The University of Tokyo, Tokyo 153-8914, Japan}
\email{umezaki@ms.u-tokyo.ac.jp}
\begin{document}
\maketitle

\begin{abstract}
We give a bound for the order of the local monodromy of a compatible system of l-adic representations, which is independent of l.
For the etale cohomology of a variety, the bound depends only on some numerical invariants of varieties.
\end{abstract}

\maketitle

\section{Introduction}

Let $K$ be a complete discrete valuation field, $k$ be the residue field of $K$ and $p$ be the characteristic of $k$.
Let $\ell$ be a prime number distinct from $p$.
Let $G_K$ be the absolute Galois group ${\rm Gal}(K^{sep}/K)$ and $I_K$ be the inertia group of $K$.

For an $\ell$-adic Galois representation of $G_K$,
Grothendieck's monodromy theorem is a fundamental result.

\begin{thm}[\cite{Serre_Tate:1968}, Appendix]
\label{thm2}
Let $\rho$ be an $\ell$-adic representation of $G_K$ and $k$ satisfy the following property:

$(C_\ell)$ No finite extension of $k$ contains all the roots of unity of $\ell$-power order.

Then there exists an open subgroup $H$ of $I_K$ such that $\rho(s)$ is unipotent for all $s \in H$.
\end{thm}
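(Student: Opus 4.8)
The plan is to reduce, after a harmless finite base change, to the case where $\rho|_{I_K}$ is entirely governed by the $\ell$-part of the tame character, and then to run a Frobenius-conjugation argument in which $(C_\ell)$ is exactly what forces the eigenvalues to be $1$. First I would use that $\rho(I_K)$, being a continuous image of a compact group, is compact, hence conjugate into $GL_n(\mathbb{Z}_\ell)$; fix once and for all a torsion-free open normal pro-$\ell$ congruence subgroup $\Gamma\subseteq GL_n(\mathbb{Z}_\ell)$ (kernel of reduction mod $\ell$, or mod $4$ if $\ell=2$). Since the wild inertia $P_K$ is pro-$p$ with $p\neq\ell$, the group $\rho(P_K)\cap\Gamma$ is simultaneously pro-$p$ and pro-$\ell$, hence trivial, so $\rho(P_K)$ embeds into the finite group $GL_n(\mathbb{Z}/\ell)$; the same argument applied to $J:=\ker\bigl(t_\ell\colon I_K\twoheadrightarrow\mathbb{Z}_\ell(1)\bigr)$, the kernel of the $\ell$-part of the tame character (a pro-prime-to-$\ell$ extension of $P_K$, closed and normal in $G_K$), shows $\rho(J)$ is finite as well. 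I would then pass to a finite extension $K_1/K$ with $\rho|_{J\cap G_{K_1}}$ trivial; this is legitimate because $I_{K_1}$ stays open in $I_K$, so it suffices to prove the statement over $K_1$. After this reduction $\rho|_{I_K}$ factors as $\rho|_{I_K}=\bar\rho\circ t_\ell$ for a continuous homomorphism $\bar\rho\colon\mathbb{Z}_\ell(1)\to GL(V)$.

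Next I would bring in $(C_\ell)$. The $\ell$-adic cyclotomic character $\chi_\ell$ factors through $G_k$ (inertia acts trivially on $\mu_{\ell^\infty}$ since $\ell\neq p$), and $(C_\ell)$ is equivalent to $\chi_\ell(G_k)$ being infinite: if it were finite, the fixed field of $\ker\chi_\ell$ would be a finite extension of $k$ containing all roots of unity of $\ell$-power order. An infinite closed subgroup of $\mathbb{Z}_\ell^\times$ contains an element $u$ of infinite order, i.e.\ not a root of unity; choose $\phi\in G_K$ lifting a preimage in $G_k$ of such a $u$, so $\chi_\ell(\phi)=u$. Since $I_K$ is normal in $G_K$ and $G_K$ acts on $\mathbb{Z}_\ell(1)$ through $\chi_\ell$, one has $t_\ell(\phi\sigma\phi^{-1})=u\,t_\ell(\sigma)$ for all $\sigma\in I_K$; applying $\bar\rho$ yields the key identity $\rho(\phi)\,\bar\rho(y)\,\rho(\phi)^{-1}=\bar\rho(uy)$ for all $y\in\mathbb{Z}_\ell(1)$.

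To finish I would extend scalars to $\overline{\mathbb{Q}}_\ell$ and simultaneously triangularize the abelian group $\bar\rho(\mathbb{Z}_\ell(1))$, obtaining characters $\chi_1,\dots,\chi_n\colon\mathbb{Z}_\ell(1)\to\overline{\mathbb{Q}}_\ell^{\times}$ on the successive quotients of a stable flag. The identity above shows that conjugation by $\rho(\phi)$ induces an isomorphism between $V$ with the $\bar\rho$-action and $V$ with the $\bar\rho(u\,\cdot\,)$-action, hence permutes the multiset $\{\chi_1,\dots,\chi_n\}$ via $\chi\mapsto\chi\circ[u]$, where $[u]$ denotes multiplication by $u$. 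Iterating $r$ times, with $r$ the order of this permutation, gives $\chi_i=\chi_i\circ[u^r]$ for every $i$, so $\chi_i$ is trivial on $(u^r-1)\mathbb{Z}_\ell(1)$. Since $u$ is not a root of unity, $u^r-1\neq 0$, so $(u^r-1)\mathbb{Z}_\ell(1)$ is an open subgroup of $\mathbb{Z}_\ell(1)$, and therefore each $\chi_i$ has finite order. Taking $H:=t_\ell^{-1}\bigl(N\,\mathbb{Z}_\ell(1)\bigr)$ with $N$ a common multiple of the orders of the $\chi_i$, which is an open subgroup of $I_K$, every $\bar\rho(t_\ell(s))$ with $s\in H$ has all of its triangular characters trivial, hence is unipotent; since $\rho|_{I_K}=\bar\rho\circ t_\ell$, this proves $\rho(s)$ unipotent for all $s\in H$, and the subgroup $H$ so obtained is still open in the inertia group of the original $K$.

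I expect the main obstacle to be the base-change step in the first paragraph. The finiteness of $\rho(P_K)$ and $\rho(J)$ is soft, but actually producing a finite extension $K_1/K$ that trivializes $\rho|_J$ requires observing that the closed subgroups $J\cap G_{K_1}$, as $K_1$ runs over the finite subextensions of $K^{sep}/K$, are downward directed with trivial intersection, and then invoking compactness to conclude that one of them lies inside the open subgroup $\ker(\rho|_J)$ of $J$. The other point demanding care is the prime $\ell=2$, both in the choice of $\Gamma$ (so that it is torsion free) and in the elementary fact that eigenvalues of elements of $\Gamma$ reduce to $1$.
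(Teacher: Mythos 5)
Your proposal is correct, and its engine is the same classical Grothendieck mechanism that the paper only sketches: choose a lattice and a pro-$\ell$ congruence subgroup, observe that after a finite base change the inertia action factors through the $\ell$-part $t_\ell$ of the tame quotient, and conjugate by a lift $\phi$ with $\chi_\ell(\phi)=u$ of infinite order --- the existence of such a $u$ being exactly where $(C_\ell)$ enters --- to force the diagonal characters to be permuted under precomposition with $[u]$ and hence to have finite order. Where you genuinely diverge is the endgame. The paper (following Serre--Tate) keeps the congruence subgroup $I_\ell=\rho^{-1}(1+\ell M_d(\mathbb{Z}_\ell))\cap I_K$ (mod $4$ if $\ell=2$) and concludes unipotence \emph{on $I_\ell$ itself}, because eigenvalues there are roots of unity congruent to $1$, hence equal to $1$; you instead shrink further to $H=t_\ell^{-1}\bigl(N\,\mathbb{Z}_\ell(1)\bigr)$ where the finite-order characters die. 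Your route suffices for Theorem 2 as stated and avoids the integrality/congruence argument altogether, but it forfeits the quantitative refinement the paper extracts from that argument: Proposition 1 needs unipotence on all of $I_\ell$ to get the index bound $[I_K:I]\mid C_{\ell,d}$, which drives the main theorem, whereas your $H$ has no controlled index. A side effect is that your worries about $\ell=2$ are moot in your own argument: you only use that $\Gamma$ is pro-$\ell$ (true for the kernel of reduction mod $\ell$ even when $\ell=2$); torsion-freeness and the mod-$4$ choice matter only for the paper's sharper endgame. Two small tidying points: the base-change step needs no compactness --- take $K_1$ to be the fixed field of the open subgroup $\rho^{-1}(\Gamma)$, so that $\rho(J\cap G_{K_1})\subseteq\rho(J)\cap\Gamma=\{1\}$ --- and after replacing $K$ by $K_1$ you should note that the new residue field still satisfies $(C_\ell)$ (its finite extensions are finite extensions of $k$) and choose $\phi\in G_{K_1}$, so that $\phi\sigma\phi^{-1}$ remains in the inertia group on which $\rho$ factors through $t_\ell$.
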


For a profinite group, an open subgroup has finite index.
In this paper, we discuss this index.
More precisely, if the representation appears as the $\ell$-adic etale cohomology of an algebraic variety over $K$,
we can compute a bound for the index which is uniform on $X$ and independent of $\ell$ by using some explicit numerical invariants.

The statement is the following.

\begin{thm}
\label{thm1}
Let $n$ be a positive integer, $b\in\mathbb{N}^{n}$ and $c \in \mathbb{Z}^{n-1}$.
The constant $C_{b,c,n}$ (defined in Definition 3.3) satisfies the following property:

For a smooth projective geometrically connected variety $X$ of dimension $n$ over $K$
and a very ample invertible sheaf $L$ on $X$ with $(b_i(X))_{i=1,\ldots,n} = b, (c_i(X, L))_{i=1,\ldots,n-1} = c$,
there exists an open subgroup $I$ of $I_K$ of index $[I_K:I]$ dividing $C_{b,c,n}$
such that the action of $I$ on $H^i(X_{\bar{K}}, \mathbb{Q}_\ell)$ is unipotent for every $i$ and every $\ell$.

\end{thm}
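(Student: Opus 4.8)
The plan is to pin down, for each degree $i$ and each prime $\ell\neq p$, the exact index of the largest open subgroup of $I_K$ acting unipotently on $V_{i,\ell}:=H^i(X_{\bar{K}},\mathbb{Q}_\ell)$, and then to bound that index uniformly. Write $\rho_{i,\ell}\colon I_K\to\mathrm{GL}(V_{i,\ell})$ for the inertia action and $\rho_{i,\ell}^{\mathrm{ss}}$ for its semisimplification. For an open subgroup $J\subseteq I_K$ the three conditions ``$\rho_{i,\ell}(g)$ is unipotent for all $g\in J$'', ``$\rho_{i,\ell}^{\mathrm{ss}}|_J$ is trivial'' and ``$J\subseteq\ker(\rho_{i,\ell}^{\mathrm{ss}}|_{I_K})$'' are equivalent; hence, once one knows $\rho_{i,\ell}^{\mathrm{ss}}(I_K)$ is finite --- Grothendieck quasi-unipotence, which for the $\ell$-adic cohomology of a variety holds over an arbitrary complete discrete valuation field (by reduction to the semistable case via de Jong alterations, so that the hypothesis $(C_\ell)$ of Theorem~\ref{thm2} is not needed here) --- the group $I_{i,\ell}:=\ker(\rho_{i,\ell}^{\mathrm{ss}}|_{I_K})$ is the largest such $J$, of index $N_{i,\ell}:=|\rho_{i,\ell}^{\mathrm{ss}}(I_K)|$. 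Thus it suffices to exhibit one finite extension $K'/K$ whose ramification index $e(K'/K)$ divides $C_{b,c,n}$ and with $I_{K'}\subseteq I_{i,\ell}$ for all $i$ and all $\ell$: then $I:=I_{K'}$ has $[I_K:I]=e(K'/K)$ and works, and $C_{b,c,n}$ can be taken to be a common multiple of all the ramification indices that occur.

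Such a $K'$ I would extract from a semistable model. If $X_{K'}$ acquires semistable reduction --- or, more generally, if there is a proper generically finite alteration $f\colon Y\to X_{K'}$ with $Y$ having semistable reduction --- then $I_{K'}$ acts unipotently on every $V_{i,\ell}$: for semistable $Y$ this follows from the Rapoport--Zink weight spectral sequence, whose $E_1$-terms carry trivial inertia action, and in general $V_{i,\ell}$ is a direct summand of $H^i(Y_{\bar{K}},\mathbb{Q}_\ell)$ as an $I_{K'}$-module via $\tfrac{1}{\deg f}\,f^*$, so unipotence descends from $Y$ to $X_{K'}$. The theorem is thereby reduced to bounding, uniformly over all $X$ with the prescribed invariants, the minimal $e(K'/K)$ over which $X$ admits a semistable alteration; de Jong's theorem gives such a $K'$, and the task is to make its ramification effective. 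This is where the invertible sheaf $L$ and the invariants $c_i(X,L)$ enter: embedding $X$ by a suitable power of $L$ presents it as a point of a Hilbert scheme whose ``size'' --- the ambient projective dimension and the degree $\deg_L X$ --- is bounded in terms of $c$, which in turn bounds the discrete data governing the degeneration (the combinatorics of the special fibre of a chosen model, the degree of an alteration semistabilizing it, and the ramification this forces). When $n=1$, where $c$ is empty, this is classical: a curve of genus $g$ attains semistable reduction over $K(\mathrm{Jac}(C)[3])$, whose ramification over $K$ divides $|\mathrm{GSp}_{2g}(\mathbb{F}_3)|$ with $2g=b_1$; the general case should follow by fibering $X$ in curves over an $(n-1)$-dimensional base and inducting, controlling how degrees and ramification accumulate in terms of $c$.

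A parallel, more explicit route bounds $N_{i,\ell}$ by splitting it along the wild inertia subgroup $P_K\subseteq I_K$. In the exact sequence $1\to\Phi_{i,\ell}\to\rho_{i,\ell}^{\mathrm{ss}}(I_K)\to C_{i,\ell}\to 1$ the wild image $\Phi_{i,\ell}=\rho_{i,\ell}^{\mathrm{ss}}(P_K)$ is a finite $p$-group, while $C_{i,\ell}$ is cyclic of order prime to $p$, generated by a tame generator acting semisimply with root-of-unity eigenvalues; if that generator has order $m$ then some primitive $m$-th root of unity occurs among the at most $b_i$ eigenvalues, so $\varphi(m)\le b_i$ for Euler's function $\varphi$, and since $\varphi(m)$ tends to infinity with $m$ this bounds $|C_{i,\ell}|$, bounds the exponent of $\Phi_{i,\ell}$, and forces $\Phi_{i,\ell}=1$ as soon as $p>b_i+1$. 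Bounding $|\Phi_{i,\ell}|$ for the finitely many remaining small $p$ I would then handle by feeding in a bound --- again expressible through $b$, $c$, $n$ via conductor formulas and the bounded projective degree --- on the Swan conductor of $V_{i,\ell}$, which controls the depth of the ramification filtration of $\Phi_{i,\ell}$, combined with the exponent bound (or, less elementarily, with Jordan's theorem on finite subgroups of $\mathrm{GL}_{b_i}$).

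The routine parts are the reformulation of the index as $|\rho_{i,\ell}^{\mathrm{ss}}(I_K)|$ and the root-of-unity estimate for the tame quotient. The substance --- and, I expect, the bulk of the work, where the constant $C_{b,c,n}$ of Definition~3.3 is built --- is the effective geometric input: converting the a priori ineffective semistable reduction theorem, or the conductor formulas, into honest bounds depending only on $b$, $c$, $n$, using only that $X$ admits a projective embedding of degree bounded by $c$. Note that once the bound is phrased through the extension $K'$, or through the Swan conductor, its independence of $\ell$ is automatic, which is the other assertion of the theorem.
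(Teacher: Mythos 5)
Your reduction of the theorem to an \emph{effective} semistable (or alteration) statement is where the argument breaks down. De Jong's theorem gives no control on the ramification index $e(K'/K)$ of an extension over which a semistable alteration exists, nor on the degree of the alteration, and no uniform effective version in terms of the Hilbert-scheme data (the $L$-embedding degree, $b$, $c$, $n$) is known; your sketch --- fibering $X$ in curves over an $(n-1)$-dimensional base and ``controlling how degrees and ramification accumulate'' --- is exactly the unproved content, and the parallel route likewise presupposes a uniform bound on the Swan conductor of $H^i(X_{\bar{K}},\mathbb{Q}_\ell)$ in terms of $b,c,n$, which you do not establish and which does not obviously follow from the stated invariants. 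So the proposal is a program whose hard step is deferred at precisely the point where the constant would have to be produced. There is also a mismatch with the statement itself: the theorem asserts divisibility by the specific constant $C_{b,c,n}$, a product of the constants $C_{d}=\gcd_{\ell\neq p}C_{\ell,d}$ attached to the middle Betti numbers of iterated hyperplane sections; even a completed semistable-reduction argument (e.g.\ your $n=1$ case via $\mathrm{Jac}(C)[3]$, giving a bound by $|\mathrm{GSp}_{2g}(\mathbb{F}_3)|$) would produce a different bound, not divisibility by this $C_{b,c,n}$.

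For contrast, the paper never touches models or alterations: the quantitative input is Grothendieck's quasi-unipotence argument itself, which shows the maximal subgroup of $I_K$ acting unipotently on a $d$-dimensional $\ell$-adic representation has index dividing $C_{\ell,d}$ (the order of $GL_d(\mathbb{F}_\ell)$, resp.\ $GL_d(\mathbb{Z}/4\mathbb{Z})$), upgraded in Corollary~\ref{cor1} to the $\ell$-independent bound $C_d$ for a trace-compatible system. The theorem then follows by induction on $n$ through a smooth hyperplane section: weak Lefschetz and Poincar\'e duality transfer unipotence from the section to all $H^i(X_{\bar K},\mathbb{Q}_\ell)$ with $i\neq n$, Lemma~\ref{lem1} shows the section's invariants are computed from $(b,c)$, and Ochiai's $\ell$-independence of the alternating sum of traces isolates the middle cohomology as a compatible system to which Corollary~\ref{cor1} applies, yielding exactly the factor $C_{b_n}$ and hence $C_{b,c,n}$. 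None of the effective geometry your proposal relies on is needed, which is why the paper's proof closes while yours leaves its central step open.
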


In \cite[Proposition 6.3.2]{Berthelot:1997}, 
Berthelot shows the $\ell$-independence of the subgroup by using an alteration under more general assumption
($X$ is not necessarily proper nor smooth.) 
In this paper, we show that the index is not only independent of $\ell$ but also bounded uniformly on $X$.
In some cases we show that the subgroup itself depends only on certain numerical invariants of $X$.

If the number of field extensions of $K$ whose degree is bounded is finite then the subgroup itself is independent of $X$ and $L$.
For example, $K$ is a finite extension of $\mathbb{Q}_p$.

The content of this paper is the following.
In section 2, we give a bound for the index in the case of a compatible system of $\ell$-adic representaions.
In section 3, we apply it to the case of the etale cohomology of a variety and prove the theorem.
For the proof, we reduce it to the case of a compatible system
by using induction and an $\ell$-independence result of Ochiai \cite[Theorem 2.4]{Ochiai:1999}.

In this paper, we assume that the residue field $k$ of $K$ is a perfect field and satisfies $(C_\ell)$.
We say that a representation of a topological group $G$ is an $\ell$-adic representation
if it is a continuous linear representation on a finite dimensional vector space over $\mathbb{Q}_\ell$.

\section*{Acknowledgement}
The author is grateful to his advisor Professor Takeshi Saito for helpful discussions, warm encouragement and careful reading.
He also thanks Professor Yuichiro Taguchi for suggesting the problem,
Professor Luc Illusie for his interest and comments on references
and Professor Fabrice Orgogozo for his careful reading and helpful discussion.
This work was supported by the Program for Leading Graduate Schools, MEXT, Japan.

\section{Compatible System}

In this section, we discuss a bound for a compatible system.
This will immediately imply the $n=1$ case of our theorem.

First, we define constants $C_{\ell, d}$ and $C_d$ for a non negative integer $d$ and a prime number $\ell$.
The constant $C_{\ell, d}$ denotes the order of the finite group $GL_d(\mathbb{F}_\ell)$ if $\ell$ is not equal to 2 
and $GL_d(\mathbb{Z}/4\mathbb{Z})$ if $\ell$ is equal to $2$.
The constant $C_d$ denotes $\gcd(C_{\ell, d}\mid\ell\neq p)$.
We use these constants in the definition of the constant $C_{b,c,n}$ in Definition~\ref{def1}.

\begin{prop}
Let $(\rho, V)$ be an $\ell$-adic representation of $G_K$ with $\dim V=d$.
Then the two subsets of $I_K$, $\{s \in I_K \mid \rho(s) $ is unipotent$\}$ and $\{s \in I_K \mid {\rm Tr}(\rho(s); V) = d\}$,
are equal.
This subset $I$ is an open subgroup of $I_K$ and the index $[I_K:I]$ divides $C_{\ell, d}$.
\end{prop}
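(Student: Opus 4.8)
The plan is to first establish the equality of the two subsets, then deduce that the common set is an open subgroup, and finally bound its index. For the equality, observe that for $s \in I_K$ the eigenvalues of $\rho(s)$ are roots of unity: this follows from Grothendieck's monodromy theorem (Theorem~\ref{thm2}), since some open subgroup $H \le I_K$ acts unipotently, so $\rho(s)^m$ is unipotent for $m = [I_K : H]$, forcing every eigenvalue $\lambda$ of $\rho(s)$ to satisfy $\lambda^m = 1$. Now $\rho(s)$ is unipotent if and only if all its eigenvalues equal $1$, while ${\rm Tr}(\rho(s); V) = d$ says $\sum_{j} \lambda_j = d$ where the $\lambda_j$ are the $d$ eigenvalues (with multiplicity). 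Since each $\lambda_j$ is a root of unity, $|\lambda_j| = 1$, and $\sum_j \lambda_j = d = \sum_j |\lambda_j|$ forces $\lambda_j = 1$ for all $j$ by the equality case of the triangle inequality. This gives the inclusion ${\rm Tr} = d \implies$ unipotent; the reverse inclusion is trivial since a unipotent matrix in $GL_d$ has trace $d$. Hence the two subsets coincide; call this set $I$.

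Next I would show $I$ is a subgroup. The cleanest route is to use the trace description after reducing mod $\ell$. Up to conjugation we may assume $\rho$ takes values in $GL_d(\mathbb{Z}_\ell)$ (a standard compactness argument: the image is a compact subgroup of $GL_d(\mathbb{Q}_\ell)$, hence contained in a conjugate of $GL_d(\mathbb{Z}_\ell)$). Consider the reduction $\bar\rho : G_K \to GL_d(\mathbb{F}_\ell)$ (or $GL_d(\mathbb{Z}/4\mathbb{Z})$ when $\ell = 2$). I claim $s \in I$ if and only if $\bar\rho(s)$ is unipotent, equivalently lies in the kernel of $\bar\rho$ composed with... more precisely, the point is that an element of $GL_d(\mathbb{Z}_\ell)$ of finite order whose reduction mod $\ell$ (resp. mod $4$) is unipotent must itself be unipotent of order a power of $\ell$ coprime-to... — here one uses that a finite-order element of $1 + \ell M_d(\mathbb{Z}_\ell)$ (resp. $1 + 2M_d(\mathbb{Z}_2)$) is unipotent, which is the usual no-small-subgroups fact for $GL_d(\mathbb{Z}_\ell)$ when $\ell$ is odd, and for $\ell = 2$ requires passing to level $4$. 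Since by the first paragraph every $\rho(s)$ with $s \in I_K$ has finite order, we conclude $I = \{s \in I_K : \bar\rho(s) \text{ is unipotent}\}$. But this last set is the preimage under the continuous homomorphism $\bar\rho|_{I_K}$ of the set of unipotent elements in the finite group $GL_d(\mathbb{F}_\ell)$ (resp. $GL_d(\mathbb{Z}/4\mathbb{Z})$), and the unipotent elements form the Sylow-$\ell$... no — rather, I would argue directly: $I$ is the preimage of $\{1\}$ under the composite $I_K \xrightarrow{\bar\rho} \bar\rho(I_K) \to \bar\rho(I_K)/U$ where $U$ is the normal subgroup... Actually the simplest statement: every element of $I_K$ has pro-finite-order image, the unipotent elements of finite order in $GL_d(\mathbb{Z}_\ell)$ form a subgroup only after this reduction identification, so $I = \ker(\bar\rho|_{I_K} \bmod \text{(unipotent radical)})$ — I would phrase it as: $\bar\rho(I_K)$ is a finite group, its unipotent elements constitute a normal $\ell$-subgroup (being contained in the unipotent radical of any Borel after choosing a $\bar\rho(I_K)$-stable flag — actually simpler: the unipotent elements of $\bar\rho(I_K)$ are exactly those in the image $\bar\rho(P)$ of the wild inertia $P$, which is a normal pro-$p$ subgroup — wait, here $p \ne \ell$, so this needs care).

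Let me instead bound the index first, since that is cleaner and the subgroup property will follow. The group $\bar\rho(I_K)$ is a finite subgroup of $GL_d(\mathbb{F}_\ell)$ (resp. $GL_d(\mathbb{Z}/4\mathbb{Z})$), so $|\bar\rho(I_K)|$ divides $C_{\ell,d}$. Consider the kernel $I_1 = \ker(\bar\rho|_{I_K})$, which is open and normal in $I_K$ with $[I_K : I_1] = |\bar\rho(I_K)|$ dividing $C_{\ell,d}$. By the no-small-subgroups fact, every element of $I_1$ maps under $\rho$ into $1 + \ell M_d(\mathbb{Z}_\ell)$ (resp. $1 + 2M_d$), and combined with the finite-order property from Theorem~\ref{thm2} this forces $\rho(I_1)$ to consist of unipotent elements; hence $I_1 \subseteq I$. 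Therefore $I \supseteq I_1$ is open, its index $[I_K : I]$ divides $[I_K : I_1] = |\bar\rho(I_K)|$, which divides $C_{\ell,d}$; and $I$, containing the open subgroup $I_1$, is automatically a subgroup once we know — which we do from the trace equality combined with the fact that $I/I_1 \subseteq \bar\rho(I_K)$ is precisely the set of unipotent elements, and I would check this set is a subgroup by noting it equals the unipotent radical of $\bar\rho(I_K)$ acting on $V \otimes \bar{\mathbb{F}}_\ell$, or more elementarily: the tame quotient $I_K/P$ is procyclic (prime-to-$p$), $\bar\rho(I_K)$ is generated by $\bar\rho(P)$ and the image of a topological generator of the tame quotient, and one disentangles the unipotent part as a normal subgroup.

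The main obstacle I anticipate is the $\ell = 2$ bookkeeping and, relatedly, pinning down cleanly why the set of unipotent elements in the finite image $\bar\rho(I_K)$ is a subgroup — the trace criterion handles it abstractly (if $\rho(s), \rho(t)$ are unipotent then so is $\rho(st)$ since ${\rm Tr}(\rho(st)) = d$ — but proving \emph{that} trace equality for the product is exactly what requires knowing all the relevant elements are simultaneously triangularizable, i.e. lie in a common conjugate of the upper-triangular unipotent group, which circles back to the structure of $\bar\rho(I_K)$). I would resolve this by choosing a $\bar\rho(I_K)$-stable maximal flag in $V \otimes \bar{\mathbb{F}}_\ell$ on the socle/radical filtration, observing the unipotent elements are exactly those acting trivially on the associated graded, and that condition is manifestly subgroup-closed and matches $\rho(I_1)$; the index bound then comes for free from $[I_K : I_1] \mid C_{\ell,d}$.
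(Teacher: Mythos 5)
Your proof of the set equality is fine (and a bit more direct than the paper's): from Theorem~\ref{thm2} every $\rho(s)$, $s\in I_K$, has all eigenvalues roots of unity, and then ${\rm Tr}=d$ forces all eigenvalues to be $1$. The genuine gap is in the second half: you never correctly establish that $I$ is a \emph{subgroup}, and both the openness and the divisibility $[I_K:I]\mid C_{\ell,d}$ hinge on that (knowing $I\supseteq I_1$ with $I_1$ open of index dividing $C_{\ell,d}$ says nothing about $[I_K:I]$ unless $I$ is a group). Your proposed resolutions rest on the identification $I=\{s\in I_K\mid \bar\rho(s)\ \text{is unipotent}\}$ (equivalently, ``acts trivially on the associated graded of a stable flag in $V\otimes\bar{\mathbb F}_\ell$''), and this identification is false: unipotence mod $\ell$ does not imply unipotence in characteristic $0$. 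For instance, let inertia act through a cyclic quotient of order $\ell$ by the companion matrix of the cyclotomic polynomial $\Phi_\ell$ in $GL_{\ell-1}(\mathbb Z_\ell)$; then every $\bar\rho(s)$ is unipotent (its characteristic polynomial is $(T-1)^{\ell-1}$ mod $\ell$), but $\rho(s)$ is not unipotent for $s$ a generator. (Relatedly, the unipotent elements of a finite subgroup of $GL_d(\mathbb F_\ell)$ need not form a subgroup, so ``manifestly subgroup-closed'' does not rescue the argument.) A secondary, fixable inaccuracy: you invoke a ``finite-order property from Theorem~\ref{thm2}'' to conclude that $\rho(I_1)$ is unipotent; Theorem~\ref{thm2} gives no finite order, only (via your own argument) root-of-unity eigenvalues, and the statement you actually need --- an element of $1+\ell M_d(\mathbb Z_\ell)$ (resp. $1+4M_d(\mathbb Z_2)$) whose eigenvalues are roots of unity is unipotent --- requires an argument ($\ell$-adic $\log/\exp$, or torsion-freeness of the principal congruence subgroup applied to the semisimple part), not just the no-small-subgroups fact about torsion elements.

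The paper closes exactly this gap by a different device, the Weil--Deligne construction: choose a surjection $t:I_K\to\mathbb Z_\ell$ and $s_0\in I_\ell=\rho^{-1}(1+\ell M_d(\mathbb Z_\ell))$ with $t(s_0)\neq0$, set $N=t(s_0)^{-1}\log\rho(s_0)$ (nilpotent) and $r(s)=\rho(s)\exp(-t(s)N)$. One checks $\rho(s)$ commutes with $N$, so $r$ is a homomorphism with finite image (trivial on $I_\ell$), ${\rm Tr}\,r(s)={\rm Tr}\,\rho(s)$, and $r(s)$ is unipotent iff $\rho(s)$ is. For a finite-image representation, unipotent plus finite order means trivial, so $I=\ker(r|_{I_K})$ is an open subgroup containing $I_\ell$, and the index bound follows from the injection $I_K/I_\ell\hookrightarrow GL_d(\mathbb F_\ell)$ (resp. $GL_d(\mathbb Z/4\mathbb Z)$). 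If you want to keep your outline, you must either import this reduction to a finite-image twist or find another correct proof that the unipotent locus in $\rho(I_K)$ is a group; as written, that step fails.
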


This is a refinement of Theorem~\ref{thm2} proved by Grothendieck in \cite[Appendix]{Serre_Tate:1968}.
See also Deligne \cite[Th\'eor\`eme 8.2]{Deligne:1973}.

Before proving this proposition, we review the proof of Theorem~\ref{thm2}.
By taking an appropriate basis of $V$, we may assume that the image of $\rho$ is contained in a lattice $L\cong\mathbb{Z}_\ell^d$ of $V$. 
Put $I_\ell=\rho^{-1}(1+\ell M_d(\mathbb{Z}_\ell))$ (resp. $\rho^{-1}(1+4M_d(\mathbb{Z}_2))$ if $\ell=2$).
This is an open subgroup of $I_K$.

The key step is to show that if $\rho(s)$ is contained in $1+\ell^2 M_d(\mathbb{Z}_\ell)$ for $s \in I_K$, then it is unipotent.
For $\ell \neq 2$, it is enough to be contained in $1+\ell M_d(\mathbb{Z}_\ell)$.
This shows that there is an open subgroup of $I_K$ whose action on $V$ is unipotent and all the eigenvalues of $s \in I_K$ are roots of unity.

\begin{proof}
First, consider the case that the image of $\rho$ is finite.
If $X\in M_d(\mathbb{Q}_\ell)$ is unipotent and of finite order, then it is trivial.
Hence, $\ker(\rho)$ is equal to the subset of unipotent elements.
The latter contains $\ker(\rho)$ and these are equal because all the eigenvalues of $\rho(s)$ are roots of unity.

The general case is reduced to the case of a finite image as follows.

Take a continuous surjection $t:I_K \to \mathbb{Z}_\ell$ and an element $s_0 \in I_\ell$ such that $t(s_0)\neq 0$.
Then $\rho|_{I_\ell}$ factors through this $t$ and $\tilde{\rho}:\mathbb{Z}_\ell\to GL_n(\mathbb{Z}_\ell)$
because the image $\rho(I_\ell)$ is pro-$\ell$ and $\ker t$ is prime to $\ell$.
Put $N = t(s_0)^{-1} \log(\rho(s_0))$ and $r(s)=\rho(s)\exp(-t(s)N)$.
Since $\rho(s_0)\in1+\ell M_d(\mathbb{Z}_\ell)$ is unipotent, $N$ is nilpotent.

The image of $r$ is finite because $r(s)=1$ for $s\in I_\ell$.
We can show that $\rho(s)$ and $\exp(-t(s)N)$ commutes.
In fact, $\rho(ss_0s^{-1})=\tilde{\rho}(t(ss_0s^{-1}))=\tilde{\rho}(t(s_0))=\rho(s_0)$.
This implies that ${\rm Tr}(r(s))={\rm Tr}(\rho(s))$ and that $r(s)$ is unipotent if and only if $\rho(s)$ is unipotent.

Finally, we show the assertion about the index.
In fact, $I$ contains $I_\ell$ and so $[I_K: I]$ devides $[I_K:I_\ell]$.
For $\ell\neq2$, the subgroup $1+\ell M_d(\mathbb{Z}_\ell)$ is the kernel of the natural map $GL_d(\mathbb{Z}_\ell) \to GL_d(\mathbb{F}_\ell)$.
This implies that the induced map $I_K/I_\ell \to GL_d(\mathbb{F}_\ell)$ is injective and the index $[I_K:I_\ell]$ divides $C_{\ell, d}$.
The same argument works for $\ell=2$.
\end{proof}

The smooth representation $r$ and the nilpotent operator $N$ defined in the above proof is the Weil-Deligne representation associated to $\rho$.
See \cite[D\'efinition 8.4.1]{Deligne:1973}.

As a corollary of this proposition, we get the following, which is a key step of the proof of the theorem.

\begin{cor}
\label{cor1}
Let $\{(\rho_\ell, V_\ell)\}_{\ell \neq p}$ be a family of $\ell$-adic representations of $G_K$
such that ${\rm Tr}(\rho_\ell(s);V_\ell) = {\rm Tr}(\rho_{\ell'}(s);V_{\ell'}) \in \mathbb{Q}$ for every $\ell$, $\ell'$ and $s \in I_K$.
Then the subgroup $I = \{s \in I_K \mid \rho_\ell(s) $ is unipotent$\}$ is independent of $\ell$
and the index $[I_K:I]$ divides $C_d$.
\end{cor}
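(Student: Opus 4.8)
The plan is to deduce the statement directly from the proposition above, whose essential content is the identification of the unipotent locus of a single $\rho_\ell$ with a \emph{trace condition}. First I would record that the dimension $d=\dim V_\ell$ is the same for all $\ell\neq p$: evaluating the hypothesis at the identity element $1\in I_K$ gives $\dim V_\ell={\rm Tr}(\rho_\ell(1);V_\ell)={\rm Tr}(\rho_{\ell'}(1);V_{\ell'})=\dim V_{\ell'}$, so that the constant $C_d$ appearing in the statement is unambiguous.

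Next I would apply the proposition to each $\rho_\ell$ separately. It yields that
\[
I_\ell:=\{s\in I_K\mid \rho_\ell(s)\text{ is unipotent}\}=\{s\in I_K\mid {\rm Tr}(\rho_\ell(s);V_\ell)=d\},
\]
that $I_\ell$ is an open subgroup of $I_K$, and that the index $[I_K:I_\ell]$ divides $C_{\ell,d}$. The key observation is then that, by the compatibility hypothesis, the function $s\mapsto{\rm Tr}(\rho_\ell(s);V_\ell)$ on $I_K$ takes values in $\mathbb{Q}$ and does not depend on $\ell\neq p$; hence the subset of $I_K$ cut out by the equation ${\rm Tr}(\rho_\ell(s);V_\ell)=d$ is literally the same for every $\ell\neq p$. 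This gives $I_\ell=I$ for all $\ell\neq p$, which is the first assertion. For the second, since $[I_K:I]=[I_K:I_\ell]$ divides $C_{\ell,d}$ for each $\ell\neq p$, it divides $\gcd(C_{\ell,d}\mid\ell\neq p)=C_d$.

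I do not expect a genuine obstacle here: the substantive work — that quasi-unipotence of $\rho_\ell(s)$ for $s\in I_K$ is detected by the single rational number ${\rm Tr}(\rho_\ell(s);V_\ell)$, and that the resulting index is controlled by the order of $GL_d(\mathbb{F}_\ell)$ (or of $GL_d(\mathbb{Z}/4\mathbb{Z})$ when $\ell=2$) — is already contained in the proposition. The only point requiring a little care is to invoke the \emph{trace} description of $I_\ell$ rather than the unipotence description directly: a priori the locus where $\rho_\ell(s)$ is unipotent could vary with $\ell$, and it is precisely the equality of the two loci furnished by the proposition that, combined with the trace compatibility, makes the $\ell$-independence transparent.
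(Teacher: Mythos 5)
Your proposal is correct and follows exactly the route the paper intends (the corollary is stated without a written proof precisely because it is immediate from the proposition): the trace characterization of the unipotent locus gives $\ell$-independence, and divisibility by each $C_{\ell,d}$ gives divisibility by their gcd $C_d$. Your preliminary remark that $d=\dim V_\ell$ is $\ell$-independent, obtained by evaluating the trace hypothesis at $s=1$, is a small but welcome point the paper leaves implicit.
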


We can improve this bound in some cases as follows.

Let $\{(\rho_\ell, V_\ell)\}_{\ell\neq p}$ be a family of $\ell$-adic representations of $G_K$.
Suppose the image of the inertia group $I_K$ to be topologically generated by one element whose trace is an $\ell$-independent rational number.
Then the characteristic polynomial of a generator has rational coefficients.
If all the eigenvalues are primitive $i$-th roots of unity,
then the degree of the characteristic polynomial is $\varphi(i)=\sharp(\mathbb{Z}/i\mathbb{Z})^\times$.
Then we get a bound $\varphi(i)\leq\dim V_\ell$, which is independent of $\ell$.

In general, the inertia group is an extension of a group topologically generated by one element by a pro-$p$ group $P_K$.
The order of $\rho(P_K)$ is bounded by the $p$-part of $C_d$ for $d=\dim V_\ell$.
In particular, if $p<2d+1$ then the action of $P_K$ is trivial.
The order of $I_K/P_K$ is bounded by Euler function $\varphi$ in the same way discussed above.

If $K=\mathbb{C}((t))$, we can not apply Grothendieck's theorem.
But if the representation is the etale cohomology of some variety over $K$,
then the eigenvalues of the monodromy action are roots of unity by the comparison theorem and the monodromy theorem for the singular cohomology.
In this case, we can apply the bound using the Euler function.

We note that these bounds depend only on the dimension of the representation.

\section{Theorem}

In this section, we prove the theorem.
First, we introduce some numerical invariants to compute the bound.

\begin{dfn}
\label{def1}
Let $X$ be a smooth projective variety over a separably closed field $k$ of dimension $n$,
$L$ a very ample invertible sheaf on $X$ and $\ell\neq char(k)$ a prime number.
We define numerical invariants $b_i(X)$ and $c_i(X, L)$ as follows.

\begin{gather*}
b_i(X) = \dim H^i(X, \mathbb{Q}_\ell)\\
c_i(X, L) = f_*(c(\check{\Omega}^1_X)c(L)^{-i}c_1(L)^i)).
\end{gather*}

Here,  $c_i(E) \in H^{2i}(X, \mathbb{Q}_\ell(i)))$ is the i-th Chern class and
$c(E) \in \bigoplus H^{2i}(X, \mathbb{Q}_\ell(i))$ is the total Chern class of a locally free sheaf $E$.
The map $f$ is the structure map of $X$ and $f_*$ is the Gysin morphism.
\end{dfn}

These numbers are integers and independent of $\ell$.
(See \cite[1.4]{Illusie:2006}. For $\ell$-independence of $c_i(X, L)$, we use Lemma~\ref{lem1}.)

For a variety $X$ over $k$ and a very ample invertible sheaf $L$ on $X$, a $j$-hyperplane section means
an iterated hyperplane section $X_j=X\cap H_1\cap \cdots \cap H_j$ of $X$ of codimension $j$, here $H_i$ is a hyperplane defined by $L$.

\begin{lem}[\cite{Grothendieck:1977}, Expos\'e VIII, Proposition 7.3]
\label{lem1}
Let $X$ be a smooth projective connected scheme over a separably closed field of dimension $n$
and $L$ a very ample invertible sheaf on $X$.
Take a smooth $j$-hyperplane section $X_j$ of $X$ defined by $L$.
Then
\begin{gather*}
c_j(X, L) = \sum_i (-1)^ib_i(X_j)\\
b_{n-j}(X_j) = (-1)^{n-j}\left(c_j(X, L) - 2\sum_{i=0}^{i=n-j-1}(-1)^i b_i(X)\right)
\end{gather*}
and it does not depend on the choice of hyperplane section.
\end{lem}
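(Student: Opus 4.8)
The plan is to reduce everything to the weak Lefschetz theorem together with a Gysin/excess-intersection computation for iterated hyperplane sections, which is exactly the content of SGA~7, Exposé~VIII. First I would recall that, for a smooth projective connected $X$ of dimension $n$ and a smooth $j$-hyperplane section $X_j$ defined by the very ample sheaf $L$, the weak Lefschetz theorem gives isomorphisms $H^i(X,\mathbb{Q}_\ell)\xrightarrow{\ \sim\ }H^i(X_j,\mathbb{Q}_\ell)$ for $i<n-j$ and an injection for $i=n-j$; by Poincaré duality on $X_j$ one gets the dual surjection in degrees $i>n-j$, so the only ``new'' cohomology of $X_j$ sits in the middle degree $n-j$. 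This immediately yields the relation between $b_{n-j}(X_j)$ and the alternating sums of the $b_i(X)$ once we know $\sum_i(-1)^i b_i(X_j)=c_j(X,L)$: indeed, splitting the Euler characteristic of $X_j$ into the part coming from $X$ (degrees $\neq n-j$, each appearing once below the middle and once above by duality, hence contributing $2\sum_{i=0}^{n-j-1}(-1)^i b_i(X)$ after accounting for the sign $(-1)^{n-j}$ of the middle term) and the remaining middle piece gives the displayed formula for $b_{n-j}(X_j)$.

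Next I would establish the identity $c_j(X,L)=\sum_i(-1)^i b_i(X_j)=\chi(X_j)$, i.e.\ that the numerical invariant of Definition~\ref{def1} computes the $\ell$-adic Euler characteristic of a smooth $j$-hyperplane section. The key step is the self-intersection formula: if $i\colon Y\hookrightarrow X$ is the closed immersion of a smooth hyperplane section with normal bundle $i^*L$, then $i_* i^* \alpha=\alpha\cdot c_1(L)$ in cohomology, and the Gysin pushforward satisfies $f_*\circ i_* = (f|_Y)_*$. Iterating $j$ times, one shows that for any class $\beta$ on $X$,
\begin{equation*}
(f|_{X_j})_*\bigl((i_j\cdots i_1)^*\beta\bigr)=f_*\bigl(\beta\cdot c_1(L)^j\bigr),
\end{equation*}
and combining this with the adjunction exact sequence $0\to T_{X_j}\to i^*T_X\to i^*L\to 0$ (so that $c(\check\Omega^1_{X_j})=(i^*c(\check\Omega^1_X))\cdot c(i^*L)^{-1}$, iterated $j$ times) identifies $\chi(X_j)=(f|_{X_j})_*\bigl(c_{n-j}(\check\Omega^1_{X_j})\bigr)$ — this is the $\ell$-adic Gauss--Bonnet formula — with $f_*\bigl(c(\check\Omega^1_X)\,c(L)^{-j}\,c_1(L)^j\bigr)_{\deg n}=c_j(X,L)$. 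I would be careful that only the degree-$n$ part of the total Chern class product survives under $f_*$, so the two formally different-looking expressions agree.

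Finally I would address independence of the hyperplane section. Here the cleanest route is: $\chi(X_j)$ depends only on the numerical data because $c_j(X,L)$ is manifestly defined without reference to a choice of $H_1,\dots,H_j$; conversely, any two smooth $j$-hyperplane sections defined by $L$ are members of the same linear system $|L|$ (or $|L^{\otimes 1}|$ along the Veronese), and over a separably closed field one can connect them through the smooth locus of the universal family by a chain of specializations/generizations, along which the $\ell$-adic Euler characteristic of the fibres is constant by smooth proper base change. The main obstacle I anticipate is precisely this last point — verifying existence of smooth hyperplane sections and the constancy of Euler characteristic in the family (a Bertini-type argument plus a connectedness argument for the parameter space of smooth sections), as opposed to the Chern-class bookkeeping, which is routine once the self-intersection and adjunction formulas are in place. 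Since the statement is quoted from \cite[Expos\'e VIII, Proposition 7.3]{Grothendieck:1977}, in the paper itself I would simply invoke that reference rather than reprove it, but the sketch above is how one would reconstruct it.
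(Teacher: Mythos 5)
Your sketch is mathematically sound, but note that the paper itself offers no argument for this lemma at all: it is quoted verbatim from the reference \cite{Grothendieck:1977} (which is SGA~5, LNM~589, not SGA~7 as you write at the start), so there is no internal proof to match. Taking your reconstruction on its own terms, the two main steps are correct: the identity $c_j(X,L)=\chi(X_j)$ follows, exactly as you say, from the $\ell$-adic Gauss--Bonnet formula $\chi(X_j)=(f|_{X_j})_*\bigl(c_{n-j}(T_{X_j})\bigr)$ together with $c(T_{X_j})=i^*\bigl(c(\check\Omega^1_X)c(L)^{-j}\bigr)$ (adjunction iterated $j$ times, the normal bundle being a successive extension of copies of $i^*L$) and the projection/self-intersection formula $f_*\bigl(i_*i^*\beta\bigr)=f_*\bigl(\beta\cdot c_1(L)^j\bigr)$, with only the degree-$n$ component surviving the Gysin map, as you correctly flag; and the second displayed formula then follows from weak Lefschetz in degrees $i<n-j$ plus Poincar\'e duality on $X_j$ in degrees $i>n-j$, which contributes the factor $2\sum_{i=0}^{n-j-1}(-1)^ib_i(X)$. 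One simplification: the independence of the choice of hyperplane sections is already an immediate consequence of these two formulas, since $c_j(X,L)$ and the $b_i(X)$ make no reference to the sections, so your final degeneration argument (connectedness of the parameter space of smooth sections plus local constancy of $R^if_*\mathbb{Q}_\ell$ in a smooth proper family) is a valid but redundant detour, and likewise the Bertini existence question does not arise because the lemma assumes a smooth $X_j$ is given. Since the paper's stance is simply to cite SGA~5, your closing remark that one would invoke the reference is exactly what the paper does.
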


\begin{dfn}
Let $n$ be a positive integer,
$b=(b_1, \cdots, b_n) \in \mathbb{N}^{n}$ and $c=(c_1, \cdots, c_{n-1}) \in \mathbb{Z}^{n-1}$.
We define $d_{b,c} = (d_{b,c,1}, \cdots,  d_{b,c,n}) \in \mathbb{N}^{n}$
and the constant $C_{b,c,h}$ for a positive integer $h\leq n$ as follows.
\begin{gather*}
d_{b,c,j} = 
\begin{cases}
(-1)^j\left(c_{n-j} - 2\sum_{i=0}^{i=j-1}(-1)^i b_i\right) & j \neq n \\
b_j & j =n \\
\end{cases}
\\
C_{b,c,h} = \prod_{j=1}^{j=h} C_{d_{b,c,j}}.
\end{gather*}
Here $C_{d_{b,c,j}}$ is the constant defined in the beginning of the previous section.
\end{dfn}

If $X$  and $L$ satisfies $b_i(X) = b_i$ and $c_i(X,L)=c_i$ for all $i$, $d_{b,c,j}$ is the middle Betti number $b_j(X_{n-j})$ of a smooth $(n-j)$-hyperplane section $X_{n-j}$ of $X$.

Now we state and prove the theorem.
\begin{thm}[Theorem~\ref{thm1}]
Let $n$ be a positive integer, $b\in\mathbb{N}^{n}$ and $c \in \mathbb{Z}^{n-1}$.
The constant $C_{b,c,n}$ satisfies the following property:

For a smooth projective geometrically connected variety $X$ of dimension $n$ over $K$
and a very ample invertible sheaf $L$ on $X$ with $(b_i(X))_{i=1,\ldots,n} = b, (c_i(X, L))_{i=1,\ldots,n-1} = c$,
there exists an open subgroup $I$ of $I_K$ of index $[I_K:I]$ dividing $C_{b,c,n}$
such that the action of $I$ on $H^i(X_{\bar{K}}, \mathbb{Q}_\ell)$ is unipotent for every $i$ and every $\ell\neq p$.

\end{thm}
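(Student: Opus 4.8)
The plan is to induct on $n$, reducing the general case to the compatible-system situation handled in Section 2. For $n=1$, the cohomology groups $H^0$ and $H^2$ carry (geometrically) trivial inertia action, so the only content is in $H^1(X_{\bar K},\mathbb{Q}_\ell)$; its dimension is $b_1(X)=b_1=d_{b,c,1}$, and the $\ell$-adic cohomology groups of a variety over $K$ form a compatible system in the sense of Corollary~\ref{cor1} (the traces of inertia elements are $\ell$-independent rationals, by Ochiai \cite[Theorem 2.4]{Ochiai:1999}). Hence Corollary~\ref{cor1} produces a single open subgroup $I\subseteq I_K$, independent of $\ell$, acting unipotently on $H^1$, with $[I_K:I]\mid C_{d_{b,c,1}}=C_{b,c,1}$. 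This is exactly the base case.

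For the inductive step, the idea is to cut down dimension by a hyperplane section and use the weak Lefschetz theorem. Choose a very ample $L$ and a smooth hyperplane section $Y=X_1$ defined by $L$ (possible after a separable extension of $K$; one must be slightly careful here, see below). By the weak Lefschetz theorem, the restriction map $H^i(X_{\bar K},\mathbb{Q}_\ell)\to H^i(Y_{\bar K},\mathbb{Q}_\ell)$ is an isomorphism for $i<n-1$ and injective for $i=n-1$, and these maps are $G_K$-equivariant. So inertia acts unipotently on $H^i(X_{\bar K},\mathbb{Q}_\ell)$ for all $i\le n-1$ as soon as it does so on $H^*(Y_{\bar K},\mathbb{Q}_\ell)$; by Poincaré duality on $X$ the same follows for $i\ge n+1$. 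Thus the only group not controlled by $Y$ is the middle one, $H^n(X_{\bar K},\mathbb{Q}_\ell)$. I would apply the induction hypothesis to $(Y,L|_Y)$: its numerical invariants $b_i(Y)$, $c_i(Y,L|_Y)$ are determined by $b,c$ via Lemma~\ref{lem1} (this is the reason the constants $d_{b,c,j}$ are defined by exactly that recursion), so induction yields an open $I'\subseteq I_K$ with $[I_K:I']\mid C_{b,c,n-1}$ acting unipotently on all $H^*(Y_{\bar K},\mathbb{Q}_\ell)$, hence on $H^i(X_{\bar K},\mathbb{Q}_\ell)$ for all $i\ne n$. For the middle cohomology, I would again invoke Corollary~\ref{cor1}: $\{H^n(X_{\bar K},\mathbb{Q}_\ell)\}_{\ell\ne p}$ is a compatible system (Ochiai), $\dim H^n(X_{\bar K},\mathbb{Q}_\ell)=b_n=d_{b,c,n}$, so there is an open $I''\subseteq I_K$, independent of $\ell$, with $[I_K:I'']\mid C_{d_{b,c,n}}$ acting unipotently on $H^n$. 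Setting $I=I'\cap I''$ gives an open subgroup acting unipotently on every $H^i(X_{\bar K},\mathbb{Q}_\ell)$, with
\begin{equation*}
[I_K:I]\ \Big|\ [I_K:I']\cdot[I_K:I'']\ \Big|\ C_{b,c,n-1}\cdot C_{d_{b,c,n}}=C_{b,c,n},
\end{equation*}
which is the claimed bound.

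The main obstacle is the existence and behaviour of the smooth hyperplane section over the \emph{non-closed} field $K$. Over the separable closure a smooth member of $|L|$ exists by Bertini, but to keep the Galois action meaningful I need a smooth hyperplane section defined over $K$ itself, or over a controlled extension; in general this forces a finite separable extension $K'/K$, and one must check that passing to $K'$ does not spoil the index bound (replacing $I_K$ by the inertia subgroup $I_{K'}$ of finite index, and noting that unipotence is tested after restriction, so the open subgroup found over $K'$ pulls back to one over $K$ of controlled index — or, alternatively, that the statement is really about $I_K$ and $I_{K'}\subseteq I_K$ has index dividing something harmless). The cleanest route is to observe that $b_i$ and $c_i$ are insensitive to base field extension, that the compatible-system input (Corollary~\ref{cor1}, via Ochiai) is already formulated with $I_K$, and that the weak Lefschetz isomorphisms are Galois-equivariant for whichever field the section is defined over; then a short descent argument closes the gap. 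A secondary technical point is verifying that the recursion for $d_{b,c,j}$ in Definition~3.3 matches Lemma~\ref{lem1} under one more hyperplane cut (i.e. that the invariants of $Y$ feed correctly into $C_{b,c,n-1}$), but this is a bookkeeping check rather than a genuine difficulty.
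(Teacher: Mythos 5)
There is a genuine gap in your treatment of the middle cohomology. You assert that $\{H^n(X_{\bar K},\mathbb{Q}_\ell)\}_{\ell\neq p}$ is by itself a compatible system on $I_K$, citing Ochiai, and then apply Corollary~\ref{cor1} directly over $I_K$ to obtain a subgroup $I''$ with $[I_K:I'']\mid C_{d_{b,c,n}}$. But Ochiai's theorem only gives the $\ell$-independence of the \emph{alternating sum} $\sum_i(-1)^i\mathrm{Tr}(s;H^i(X_{\bar K},\mathbb{Q}_\ell))$ for $s\in I_K$; the $\ell$-independence of the trace on an individual group such as $H^n$ is not known in general and cannot simply be quoted. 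The paper's proof is built precisely to avoid this: it first restricts to the subgroup $J$ furnished by the inductive hypothesis (via the hyperplane section, weak Lefschetz and Poincar\'e duality), on which the action on every $H^i$ with $i\neq n$ is unipotent, so that $\mathrm{Tr}(s;H^i)=b_i$ for $s\in J$ is $\ell$-independent; subtracting these from Ochiai's alternating sum shows that $\mathrm{Tr}(s;H^n)$ is $\ell$-independent \emph{for $s\in J$ only}. One then takes a finite extension $L/K$ with $I_L=J$ and applies Corollary~\ref{cor1} to the restriction to $G_L$, getting $I\subseteq J$ with $[J:I]\mid C_{b_n}$ and hence $[I_K:I]\mid C_{b,c,n-1}C_{b_n}=C_{b,c,n}$. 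Your argument omits this restriction step, and without it the hypothesis of Corollary~\ref{cor1} is unverified for $H^n$.

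A secondary flaw is your final divisibility: for two open subgroups $I',I''$ of $I_K$, the index $[I_K:I'\cap I'']$ need not divide $[I_K:I']\cdot[I_K:I'']$ (already false for two subgroups of order $2$ in $S_3$), so even granting the existence of $I''$ the stated bound would not follow from taking $I=I'\cap I''$. The nested construction $I\subseteq J\subseteq I_K$ in the paper gives $[I_K:I]=[I_K:J][J:I]$ and avoids this issue. By contrast, your worry about the hyperplane section is not a real obstacle: $K$ is a complete discrete valuation field, hence infinite, so Bertini already yields a smooth member of $|L|$ defined over $K$ and no base extension is needed. Your base case is essentially correct (for a curve the alternating sum does determine the trace on $H^1$ since inertia acts trivially on $H^0$ and on $H^2\cong\mathbb{Q}_\ell(-1)$), though the paper instead quotes the $\ell$-independence for $H^1$ from \cite[Expos\'e IX, Theorem 4.3]{Grothendieck:1972}.
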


\begin{proof}
We prove this theorem by induction on $n$.

Case $n=1$.
In this case, the theorem is essentially proved in section 2.
If $X$ is a curve, $I_K$ acts trivially on $H^0(X_{\bar{K}}, \mathbb{Q}_\ell) \cong \mathbb{Q}_\ell$
and $H^2(X_{\bar{K}}, \mathbb{Q}_\ell) \cong \mathbb{Q}_\ell(-1)$.
The $\ell$-independence of the trace of $H^1(X_{\bar{K}}, \mathbb{Q}_\ell)$ is proved in \cite[Expos\'e IX, Theorem 4.3]{Grothendieck:1972}.
By applying Corollary~\ref{cor1} to $H^1(X_{\bar{K}}, \mathbb{Q}_\ell)$, we get the result.

Case general $n$.
Assume that for all $k<n$, $b\in\mathbb{N}^k$ and $c\in\mathbb{Z}^{k-1}$, the constant $C_{b,c,k}$ satisfies the property of the theorem.
Take $b\in\mathbb{N}^n$ and $c\in\mathbb{Z}^{n-1}$.
Let $X$ be a variety and $L$ a very ample invertible sheaf satisfying the assumption of the theorem.

Step 1.
First, we show that there exists an open subgroup $J$ such that the action on $H^i(X_{\bar{K}}, \mathbb{Q}_\ell)$ for $i\neq n$ is unipotent
and the index divides $C_{b,c,n-1}$.

By the Bertini theorem, there exists a smooth hyperplane section of $X$ defined by a section of $L$.
Take such a hyperplane section $Y$.
Then $L|_Y$ is a very ample invertible sheaf on $Y$.
Put $\tilde{b} = (b_i(Y))_{i=1,\ldots, n-1}$ and $\tilde{c}=(c_i(Y, L|_Y))_{i=1,\ldots, n-2}$.
By the induction assumption, there exists an open subgroup $J$ of $I_K$ whose index divides $C_{\tilde{b}, \tilde{c}, n-1}$
and the action on $H^i(Y_{\bar{K}}, \mathbb{Q}_\ell)$ is unipotent for all $i$ and $\ell$.

We claim that $C_{\tilde{b}, \tilde{c},n-1} = C_{b, c,n-1}$.
By Lemma~\ref{lem1}, $d_{b,c,j}$ is the middle Betti number of an $(n-j)$-hyperplane section of $X$
and $d_{\tilde{b},\tilde{c},j}$ is of an $(n-j-1)$-hyperplane section of $Y$.
Because this is independent of the choice of a hyperplane section, $d_{b,c,j}=d_{\tilde{b},\tilde{c},j}$ and the claim holds.

By the weak Lefschetz theorem,
the restriction map 
$H^i(X_{\bar{K}}, \mathbb{Q}_\ell) \to H^i(Y_{\bar{K}}, \mathbb{Q}_\ell)$
is injective for $i\leq n-1$.
Hence the action of $J$ on $H^i(X_{\bar{K}}, \mathbb{Q}_\ell)$ is unipotent for $i\leq n-1$.
By the Poincare duality, the action of $J$ on $H^i(X_{\bar{K}}, \mathbb{Q}_{\ell})$ is also unipotent for $i \geq n+1$.

Step 2.
We show that there exists an open subgroup $I$ of $I_K$ such that the action on $H^n(X_{\bar{K}}, \mathbb{Q}_{\ell})$ is unipotent
and that the index divides $C_{b,c,n}$.

By the previous step it follows that for $i \neq n$ and $s \in J$,
\begin{displaymath}
{\rm Tr}(s; H^i(X_{\bar{K}}, \mathbb{Q}_\ell)) = b_i
\end{displaymath}
and this trace is independent of $\ell$.

By a theorem of Ochiai \cite[Theorem 3.1]{Ochiai:1999}, the alternating sum 
\begin{displaymath}
\sum_{0 \leq i \leq 2n}(-1)^i{\rm Tr}(s; H^i(X_{\bar{K}}, \mathbb{Q}_{\ell}))
\end{displaymath}
is independent of $\ell$ for $s\in I_K$.
Hence the trace ${\rm Tr}(s; H^n(X_{\bar{K}}, \mathbb{Q}_{\ell}))$ is independent of $\ell$ for $s \in J$.

We take a finite extension $L$ over $K$ such that the inertia group $I_L$ equals to $J$.
By applying Corollary~\ref{cor1} to $\rho|_{G_L}$,
there exists an open subgroup $I \subset J$ such that the action of $I$ is unipotent on $H^n(X_{\bar{K}}, \mathbb{Q}_{\ell})$ and $[J:I]|C_{b_n}$.
Also the action on $H^i(X_{\bar{K}}, \mathbb{Q}_{\ell})$ $i\neq n$ is unipotent
and the index $[I_K:I]$ divides $C_{b,c,n-1}C_{b_n} = C_{b,c,n}$.
\end{proof}

We give a few remarks on this theorem.

In \cite[Theorem 3]{Katz:2001},
Katz gives an explicit upper bound for the sum of the Betti number of a projective variety depending on its degree.
The method of the proof is similar to the one given here.

If $K$ is a finite extension of $\mathbb{Q}_p$, then the action of this $I$ on $H^i(X_{\bar{K}}, \mathbb{Q}_p)$ is potentially semi-stable.
In his paper \cite{Ochiai:1999},
Ochiai shows equality of the alternating sum of traces of $H^i(X_{\bar{K}}, \mathbb{Q}_{\ell})$ and $H^i(X_{\bar{K}}, \mathbb{Q}_p)$.
If $K$ is a number field, we can show that there exists an uniform bound for the index in similar way.
That is, there exists a finite extension $L$ of $K$ of bounded degree such that the action is unipotent at $v \nmid\ell$ and semi-stable at $v\mid\ell$.

It is natural to ask whether	 this bound is strict.
By the Poincare duality, we can use the order of SL or Sp instead of GL.
Then, the constant used in this theorem decreases and we will get smaller bounds.

If $X$ is an abelian surface, Silverberg and Zarhin \cite{Silverberg_Zarhin:2005} determine all the groups in the similar way that we discussed in the end of section 2.
They show that these group appear as the inertia group of an abelian surface.

If we know the $\ell$-independence of the trace, we need only the compatible system case.

\end{document}